\definecolor{darkred}{RGB}{139,0,0}
\definecolor{darkgreen}{RGB}{0,100,0}
\definecolor{darkmagenta}{RGB}{139,0,139}
\definecolor{darkpurple}{RGB}{110,0,180}
\definecolor{darkblue}{RGB}{40,0,200}
\definecolor{darkorange}{RGB}{255,140,0}
\newcommand{\bsx}{\boldsymbol{x}}
\newcommand{\bszero}{\boldsymbol{0}}
\newcommand{\bst}{\boldsymbol{t}}
\newcommand{\rd}{\,{\rm d}}
\newcommand{\RR}{\mathbb{R}}
\newcommand{\NN}{\mathbb{N}}
\newcommand{\EE}{\mathbb{E}}
\newcommand{\PP}{\mathbb{P}}
\newcommand{\cP}{\mathcal{P}}
\newtheorem{definition}{Definition}
\newtheorem{theorem}{Theorem}
\newtheorem{remark}{Remark}
\newtheorem{lemma}{Lemma}
\title{Tractability properties of the discrepancy in\\ Orlicz norms}
\author{Josef Dick\thanks{Josef Dick is partly supported by the Australian Research Council Discovery Project DP190101197.}, Aicke Hinrichs\thanks{A. Hinrichs is
 supported by the Austrian Science Fund (FWF): Project F5513-N26,
which is a part of the Special Research Program ``Quasi-Monte Carlo Methods:
Theory and Applications''.}, Friedrich Pillichshammer\thanks{F. Pillichshammer is
 supported by the Austrian Science Fund (FWF): Project F5509-N26,
which is a part of the Special Research Program ``Quasi-Monte Carlo Methods:
Theory and Applications''.}\ \footnote{Corresponding author}, and Joscha Prochno\thanks{J. Prochno is
 supported by the Austrian Science Fund (FWF): Project P32405 ``Asymptotic Geometric Analysis and Applications'' and a visiting professorship from the University of Bochum and its Research School PLUS.}}
 \date{\today}
\begin{document}

\maketitle

\centerline{\large Dedicated to  Gerhard Larcher on the occasion of his $60^{{\rm th}}$ birthday\footnote{Heinrich {\it et al.} mention in their seminal paper \cite[p.~280]{HNWW} that the question about the  dependence on $d$ for the star-discrepancy was first raised by Gerhard Larcher in 1998.}.}

\begin{abstract}
We show that the minimal discrepancy of a point set in the $d$-dimensional unit cube with respect to Orlicz norms can exhibit both polynomial and weak tractability. In particular, we show that the $\psi_\alpha$-norms of exponential Orlicz spaces are polynomially tractable.
\end{abstract}

\centerline{\begin{minipage}[hc]{130mm}{
{\em Keywords:} Discrepancy, Orlicz norm, tractability, quasi-Monte Carlo\\
{\em MSC 2010:} 11K38, 65C05, 65Y20}
\end{minipage}}

% % % % % % % % % % % % % % % % % % % % %
\section{Introduction and main results}
% % % % % % % % % % % % % % % % % % % % %

The discrepancy of an $N$-element point set $\cP = \{\bsx_1, \bsx_2, \ldots, \bsx_N\}$ in the unit cube $[0,1]^d$ measures the deviation of the empirical distribution of $\cP$ from the uniform measure. This concept has important applications in numerical analysis, where so-called Koksma-Hlawka inequalities establish a deep connection between norms of the discrepancy function and worst case errors of quasi-Monte Carlo integration rules determined by the point set $\cP$. For a comprehensive introduction and exposition on this subject we refer the reader to \cite{DP10,H2013,KN1974} and the references cited therein. 

To define the concept of discrepancy, we first introduce the local discrepancy function $\Delta_{\cP}: [0,1]^d \to \mathbb{R}$ defined as 
\begin{equation*}
\Delta_{\cP}(\bst) = \frac{\#\{ j \in \{1, 2, \ldots, N\}: \bsx_j \in [\bszero, \bst) \} }{N} - \mathrm{Vol}([\bszero, \bst]),
\end{equation*}
where $[\bszero, \bst) = [0, t_1) \times [0, t_2) \times \ldots \times [0, t_d)$ for $\bst = (t_1, t_2, \ldots, t_d) \in [0,1]^d$ and $\mathrm{Vol}(\cdot)$ stands for the $d$-dimensional Lebesgue measure. We now apply a norm $\| \cdot \|_{\bullet}$ to the local discrepancy function to obtain the discrepancy $\| \Delta_{\cP}\|_{\bullet}$ of the point set $\cP$ with respect to the norm $\|\cdot \|_{\bullet}$. Of particular interest are the norms on the usual Lebesgue spaces $L_p$ ($1 \le p \leq \infty$) of $p$-integrable functions on the unit cube $[0,1]^d$. Those lead to the central notions of $L_p$-discrepancy for $1\leq p<\infty$, and the $L_\infty$-discrepancy, which is usually called the star-discrepancy, when $p=\infty$.

The $N^{\mathrm{th}}$ minimal discrepancy with respect to the norm $\|\cdot \|_{\bullet}$ in dimension $d$ is the best possible discrepancy over all point sets of size $N$ in the $d$-dimensional unit cube $[0,1]^d$, i.e.,
\begin{equation*}
\mathrm{disc}_{\bullet}(N,d) = \inf_{\stackrel{\cP \subseteq [0,1]^d}{|\cP| = N}} \| \Delta_{\cP}\|_{\bullet}.
\end{equation*}
We compare this value with the initial discrepancy given by the discrepancy of the empty point set $\| \Delta_{\emptyset} \|_{\bullet}$. Since the initial discrepancy may depend on the dimension, we use it to normalize the $N^{\mathrm{th}}$ minimal discrepancy when we study the dependence of $\mathrm{disc}_{\bullet}(N,d)$ on the dimension $d$. We therefore define the inverse of the $N^{\mathrm{th}}$ minimal discrepancy in dimension $d$ as the number $N_{\bullet}(\varepsilon, d)$ which is the smallest number $N$ such that a point set with $N$ points exists that reduces the initial discrepancy at least by a factor of $\varepsilon\in(0,1)$,
\begin{equation*}
N_{\bullet}(\varepsilon, d) = \min \big\{ N \in \mathbb{N}\,:\, \mathrm{disc}_{\bullet}(N, d) \le \varepsilon \|\Delta_\emptyset \|_{\bullet} \big\}.
%, \quad \mbox{where $\varepsilon \in (0,1)$.}
\end{equation*}
In this paper we are interested in how $N_{\bullet}(\varepsilon, d)$ depends simultaneously on $\varepsilon$ and the dimension $d$. In general, the dependence of the inverse of the $N^{\mathrm{th}}$ minimal discrepancy can take different forms. For instance, if the dependence on the dimension $d$ or on $\varepsilon^{-1}$ is exponential, then we call the discrepancy intractable. If the inverse of the $N^{\mathrm{th}}$ minimal discrepancy grows exponentially fast in $d$, then the discrepancy is said to suffer from the curse of dimensionality. On the other hand, if $N_{\bullet}(\varepsilon, d)$ increases at most polynomially in $d$ and $\varepsilon^{-1}$, as $d$ increases and $\varepsilon$ tends to zero, then the discrepancy is said to be polynomially tractable. This leads us to the following definition.

\begin{definition}\label{def_poly_tract}
The discrepancy with respect to the norm $\| \cdot \|_{\bullet}$ is \textit{polynomially tractable} if there are numbers $C \in(0,\infty)$, $\tau \in(0,\infty)$, and $\sigma \in(0,\infty)$ such that
\begin{equation}\label{condPTdef}
N_{\bullet}(\varepsilon, d) \le C\, d^\tau \varepsilon^{-\sigma}, \quad \mbox{for all } \varepsilon \in (0,1) \mbox{ and all } d \in \mathbb{N}.
\end{equation}
The infimum over all exponents $\tau\in(0,\infty)$ such that a bound of the form \eqref{condPTdef} holds is called the $d$-exponent of polynomial tractability.
\end{definition}

To cover cases between polynomial tractability and intractability, we now introduce the concept of weak tractability, where $N_{\bullet}(\varepsilon, d)$ is not exponential in $\varepsilon^{-1}$ and $d$. This encodes the absence of intractability.

\begin{definition}\label{def_weakly_tract}
The discrepancy with respect to the norm $\| \cdot \|_{\bullet}$ is weakly tractable, if %there are numbers $C > 0$, $0 < \tau < 1$ and $\sigma > 0$ such that
\begin{equation*}
%N_{\bullet}(\varepsilon, d) \le C \mathrm{e}^{d^t} \varepsilon^{-\sigma}, \quad \mbox{for all } \varepsilon \in (0,1) \mbox{ and all } d \in \mathbb{N}.
\lim_{d+\varepsilon^{-1}  \to \infty} \frac{\log N_{\bullet}(\varepsilon, d)}{d+\varepsilon^{-1}} = 0.
\end{equation*}
(Throughout this paper $\log$ means the natural logarithm.)
\end{definition}

The subject of tractability of multivariate problems is a very popular and active area of research and we refer the reader to the books \cite{NW08,NW10} by Novak and Wo\'{z}niakowski for an introduction into tractability studies of discrepancy and an exhaustive exposition.

A famous result by Heinrich, Novak, Wasilkowski, and Wo\'{z}niakowski \cite{HNWW} based on the theory of empirical processes and Talagrand's majorizing measure theorem shows that the star-discrepancy is polynomially tractable. In fact, they show that $\tau$ in Definition~\ref{def_poly_tract} can be set to one and hence in this case the inverse of the star-discrepancy $N_{L_\infty}(\varepsilon, d)$ depends at most linearly on the dimension $d$. 
It was shown in \cite{HNWW} and \cite{H2004} that $\tau=1$ is the minimal possible $\tau$ in Definition~\ref{def_poly_tract} for the star-discrepancy. Determining the optimal exponent $\sigma$ for $\varepsilon^{-1}$ is an open problem.
On the other hand, the $L_2$-discrepancy is known to be intractable, as shown by Wo\'{z}niakowski \cite{W99} (see also \cite{NW10}). The behavior of the inverse of the $L_p$-discrepancy in between, where $p \notin \{2, \infty\}$, seems to be unknown.

Note that due to the normalization with the initial discrepancy, we cannot infer a continuous change in the behavior of $N_{L_p}(\varepsilon,d)$ as $p$ goes from $1$ to $\infty$. A natural assumption seems to be that the $L_p$-discrepancy is intractable for any $p \in [1, \infty)$. If correct, this would mean that there is a sharp change from intractability to polynomial tractability as one goes from $p\in [1, \infty)$ to $p=\infty$. A natural question which hence arises is what happens between those two cases $p \in [1, \infty)$ and $p = \infty$.

To study this question we shall work in the setting of (specific) Orlicz spaces. Let us recall that a function $M:[0,\infty)\to[0,\infty)$ is said to be an Orlicz function if $M(0)=0$, $M$ is convex, and $M(t)>0$ for $t>0$. If $\lim_{x\rightarrow 0} M(x)/x=\lim_{x\rightarrow \infty}x/M(x)=0$, then $M$ is called an $N$-function. The previous limit assumptions simply guarantee that the convex-dual is again an $N$-function. Now if $D\subseteq \RR^d$ is a compact set, we define the Orlicz space $L_M$ to be the space of (equivalence classes of) Lebesgue measurable functions $f$ on $D$ for which
\[
\|f\|_{M} := \inf \left \{ K > 0: \int_{D} M \left( \frac{|f(\bsx)|}{K} \right) \, \mathrm{d} \bsx \le 1 \right\} < \infty.
\]
The latter functional is a norm on $L_M$ known as Luxemburg norm, named after W.~A.~J.~Luxemburg~\cite{Lux}, which turns $L_M$ into a Banach space. One commonly just speaks of Orlicz functions, Orlicz norms, and Orlicz spaces. An introduction to the theory of Orlicz spaces can be found in \cite{KR1961}.

For our purpose, we introduce for $\alpha\in[1,\infty)$ the exponential Orlicz norms $\|\cdot \|_{\psi_\alpha}$, which for a measurable function $f$ defined on $[0,1]^d$ are given by
\begin{equation*}
\| f \|_{\psi_\alpha} = \inf \left \{ K > 0: \int_{[0,1]^d} \psi_\alpha \left( \frac{|f(\bsx)|}{K} \right) \, \mathrm{d} \bsx \le 1 \right\},
\end{equation*}
where $\psi_\alpha(x) = \exp(x^\alpha) - 1$. The assumption $\alpha \geq 1$ guarantees the convexity of $\psi_\alpha$. These norms play an important role in the study of the concentration of mass in high-dimensional convex bodies \cite{AGM_book, BGVV2014, LT77} and have recently found applications in the tractability study of multivariate numerical integration \cite{HPU2019}. They have appeared earlier in discrepancy theory and the related multivariate integration problems in fixed dimension \cite{BLPV2009, BM2018, DHMP2017}. As we shall see later, the discrepancy with respect to $\psi_\alpha$-norms turns out to be polynomially tractable as well.

In our context it is interesting to also study variations of these norms exhibiting different types of behavior of $N_{\bullet}(\varepsilon, d)$ as a function of the dimension $d$. In fact, we may write $\psi_\alpha$ as the series
\begin{equation*}
\psi_\alpha(x) = \frac{x^\alpha}{1!} + \frac{x^{2 \alpha}}{2!} + \frac{x^{3 \alpha}}{3!} + \cdots
\end{equation*}
and consider the more general case where $\psi_{\alpha}$ is replaced by a function
\begin{equation}\label{def:psi}
\psi_{\alpha, \varphi}(x) = \frac{x^\alpha}{ ( \varphi(\alpha) )^{\alpha} } + \frac{x^{2\alpha}}{ (\varphi(2 \alpha ))^{2 \alpha} } + \frac{x^{3\alpha}}{( \varphi(3 \alpha ))^{3 \alpha} } + \cdots
\end{equation}
for a non-decreasing function $\varphi: [0,\infty) \to (0,\infty)$ with $\lim_{x \rightarrow \infty}\varphi(x)=\infty$. Note that the growth condition on $\varphi$ guarantees, according to the ratio test, the absolute convergence of the series \eqref{def:psi} for all $x \in [0,\infty)$. Choosing $\varphi(p \alpha) = ( p! )^{1/(p \alpha)}$ takes us back to the $\psi_\alpha$-norm, which is therefore a special case of the more general setting.

Below we will characterize functions $\varphi$ for which the discrepancy with respect to $\| \cdot \|_{\psi_{\alpha, \varphi}}$, given by
\begin{equation*}
\| f \|_{\psi_{\alpha, \varphi}} = \inf \left \{ K > 0: \int_{[0,1]^d} \psi_{\alpha, \varphi} \left( \frac{|f(\bsx)|}{K} \right) \, \mathrm{d} \bsx \le 1 \right\},
\end{equation*}
is polynomially tractable and weakly tractable. 
%In general, if $\psi_{\alpha, \varphi}:[0,\infty) \to[0,\infty)$ is zero in zero, increasing, convex, and satisfies $\lim_{x\rightarrow 0} \psi_{\alpha, \varphi}(x)/x=\lim_{x\rightarrow \infty}x/\psi_{\alpha, \varphi}(x)=0$, then $\psi_{\alpha,\varphi}$ is called an $N$-function and $\| \cdot \|_{\psi_{\alpha, \varphi}}$ is a norm. The limit assumptions simply guarantee that the convex-dual is again an $N$-function. Such types of norms are known as Luxemburg norms, named after W.~A.~J.~Luxemburg~\cite{Lux}. 
%%The Luxemburg norm is, up to a factor $2$, equivalent to the Orlicz norm and 
%One typically just speaks of Orlicz functions and Orlicz norms. 

The aim of this paper is to show the following result.

\begin{theorem}\label{thm1}
Let $\alpha\in[1,\infty)$. Then the following hold:
\begin{enumerate}
\item The discrepancy with respect to the $\psi_\alpha$-norm $\| \cdot \|_{\psi_\alpha}$ is polynomially tractable.
\item For any non-decreasing $\varphi:[0,\infty)\to (0,\infty)$ with $\lim_{x \rightarrow \infty}\varphi(x)=\infty$ for which there exists an $r \ge 0$ and a constant $C \in(0,\infty)$ such that for all $p\geq 1$
\begin{equation}\label{condPT}
\varphi(p) \le C\, p^r,
\end{equation}
the discrepancy with respect to $\| \cdot \|_{\psi_{\alpha, \varphi}}$ is polynomially tractable. The $d$-exponent of polynomial tractability is at most $3+2r$.
\item For any non-decreasing $\varphi:[0,\infty)\to (0,\infty)$ with $\lim_{x \rightarrow \infty}\varphi(x)=\infty$ which satisfies
\begin{equation}\label{condWT}
%p^3 \varphi^2(p) < C \mathrm{e}^{p^{\tau}}, \quad \mbox{for all } p \ge 1,
\lim_{p \rightarrow \infty} \frac{\log \varphi(p)}{p} =0,
\end{equation}
%for some $0 < \tau < 1$, 
the discrepancy with respect to $\| \cdot \|_{\psi_{\alpha, \varphi}}$ is weakly tractable.
\end{enumerate}
\end{theorem}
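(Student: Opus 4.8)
The plan is to reduce all three assertions to two ingredients that are then combined through the normalization by the initial discrepancy: an upper bound on the achievable discrepancy $\mathrm{disc}_{\psi_{\alpha,\varphi}}(N,d)$ and a lower bound on the initial discrepancy $\|\Delta_\emptyset\|_{\psi_{\alpha,\varphi}}$. All the difficulty of tractability lives in how these two quantities scale in $d$, since $N_{\psi_{\alpha,\varphi}}(\varepsilon,d)$ is governed by the smallest $N$ with $\mathrm{disc}_{\psi_{\alpha,\varphi}}(N,d)\le\varepsilon\,\|\Delta_\emptyset\|_{\psi_{\alpha,\varphi}}$.

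First I would record two elementary comparisons between the Orlicz norm and the $L_q$-norms, both read off from the series \eqref{def:psi}. Writing (with $\|\cdot\|_q$ the $L_q([0,1]^d)$-norm) $\int_{[0,1]^d}\psi_{\alpha,\varphi}(|f|/K)\rd\bsx=\sum_{p\ge 1}\|f\|_{p\alpha}^{p\alpha}/(\varphi(p\alpha)K)^{p\alpha}$ and keeping a single term gives the lower bound $\|f\|_{\psi_{\alpha,\varphi}}\ge\|f\|_{p\alpha}/\varphi(p\alpha)$ for every $p$. In the other direction, bounding every $L_{p\alpha}$-norm by $\|f\|_\infty$ collapses the series to $\psi_{\alpha,\varphi}(\|f\|_\infty/K)$, whence $\|f\|_{\psi_{\alpha,\varphi}}\le\|f\|_\infty/c_{\alpha,\varphi}$ with $c_{\alpha,\varphi}:=\psi_{\alpha,\varphi}^{-1}(1)$. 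The decisive point is that $c_{\alpha,\varphi}$ depends only on $\alpha$ and $\varphi$, not on $d$. Feeding in the Heinrich--Novak--Wasilkowski--Wo\'{z}niakowski bound $\mathrm{disc}_{L_\infty}(N,d)\le C\sqrt{d/N}$ (equivalent to $N_{L_\infty}(\varepsilon,d)\le Cd\varepsilon^{-2}$, since $\|\Delta_\emptyset\|_\infty=1$) then yields the single achievability estimate $\mathrm{disc}_{\psi_{\alpha,\varphi}}(N,d)\le (C/c_{\alpha,\varphi})\sqrt{d/N}$ that serves all three parts.

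Next I would pin down the initial discrepancy. Since $\Delta_\emptyset(\bst)=-\prod_{i=1}^d t_i$, a direct computation gives $\|\Delta_\emptyset\|_q=(q+1)^{-d/q}$; applying the single-term lower bound with $p=\lceil d/\alpha\rceil$, so that $q=p\alpha\in[d,d+\alpha)$ and hence $d/q\le 1$, produces $\|\Delta_\emptyset\|_{\psi_{\alpha,\varphi}}\ge (q+1)^{-d/q}/\varphi(q)\ge 1/((d+\alpha+1)\,\varphi(d+\alpha))$. This is exactly where the hypotheses on $\varphi$ enter: under \eqref{condPT} we have $\varphi(d+\alpha)\le C(d+\alpha)^r$, so $\|\Delta_\emptyset\|_{\psi_{\alpha,\varphi}}\gtrsim d^{-(r+1)}$, whereas under the weaker \eqref{condWT} we only obtain $\log\varphi(d+\alpha)=o(d)$ and hence the subexponential bound $\|\Delta_\emptyset\|_{\psi_{\alpha,\varphi}}\ge e^{-o(d)}$. (A sharper choice of $q$, maximizing $(q+1)^{-d/q}/\varphi(q)$, would improve the exponent, but the choice $q\approx d$ already suffices.)

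Finally I would combine the two estimates. For part~(2), requiring $(C/c_{\alpha,\varphi})\sqrt{d/N}\le\varepsilon\cdot c\,d^{-(r+1)}$ and solving for $N$ gives $N_{\psi_{\alpha,\varphi}}(\varepsilon,d)\le C'\,d^{2r+3}\varepsilon^{-2}$, which is \eqref{condPTdef} with $\tau=2r+3$ and $\sigma=2$, so the $d$-exponent is at most $3+2r$; part~(1) is the special case $\varphi(p\alpha)=(p!)^{1/(p\alpha)}$, where Stirling's formula yields $\varphi(q)\le C\,q^{1/\alpha}$, i.e.\ \eqref{condPT} with $r=1/\alpha$. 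For part~(3), the same comparison with the subexponential initial bound gives $N_{\psi_{\alpha,\varphi}}(\varepsilon,d)\le C'\,d\,\varepsilon^{-2}e^{o(d)}$, so $\log N_{\psi_{\alpha,\varphi}}(\varepsilon,d)=o(d)+O(\log d)+2\log(1/\varepsilon)$ and the defining limit of weak tractability vanishes. I expect the main obstacle to be the lower bound on the initial discrepancy rather than the achievability: the normalizer is precisely what the conditions \eqref{condPT} and \eqref{condWT} are tailored to, and the heart of the argument is recognizing that an upper bound on the growth of $\varphi$ forces the initial discrepancy to decay only polynomially (respectively subexponentially) in $d$, together with the bookkeeping needed to keep every $\alpha$- and $\varphi$-dependent constant genuinely independent of $d$.
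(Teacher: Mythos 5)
Your proof is correct, and its overall architecture coincides with the paper's: both normalize the Heinrich--Novak--Wasilkowski--Wo\'{z}niakowski achievability bound $\mathrm{disc}_{L_\infty}(N,d)\le C\sqrt{d/N}$ against a lower bound for the initial discrepancy obtained by evaluating at an integration exponent $q\approx d$, and both deduce part~(1) from part~(2) via Stirling's formula. The genuine difference lies in how the Orlicz norm is compared with $L_p$-norms. The paper proves a two-sided, dimension-free equivalence (its Lemma~\ref{lem1}) between $\|\cdot\|_{\psi_{\alpha,\varphi}}$ and the auxiliary norm $\|f\|_{\varphi}=\sup_{p\ge1}\|f\|_{L_p}/\varphi(p)$, runs the entire tractability analysis for $\|\cdot\|_{\varphi}$, and transfers the conclusion back. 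You bypass this lemma: for the normalizer you keep a single term of the series, giving $\|\Delta_\emptyset\|_{\psi_{\alpha,\varphi}}\ge\|\Delta_\emptyset\|_{L_{p\alpha}}/\varphi(p\alpha)$ with the integer $p=\lceil d/\alpha\rceil$ (correctly respecting that the series only sees multiples of $\alpha$), and for achievability you use monotonicity of $\psi_{\alpha,\varphi}$ to get $\|f\|_{\psi_{\alpha,\varphi}}\le\|f\|_{L_\infty}/\psi_{\alpha,\varphi}^{-1}(1)$, whose constant is manifestly independent of $d$. This is leaner --- no factor $2^{1/\alpha}$, no $\inf_q \varphi(q)/\max\{\varphi(\alpha),\varphi(q)\}$, no transfer-of-tractability step --- and it yields the same conclusions: $\tau=3+2r$ and $\sigma=2$ in part~(2), condition \eqref{condPT} with $r=1/\alpha$ (via $p!\le p^p$) for part~(1), and $\log N_{\psi_{\alpha,\varphi}}(\varepsilon,d)\le 2\log\varphi(d+\alpha)+O(\log d)+2\log\varepsilon^{-1}+O(1)$ for part~(3), which vanishes after division by $d+\varepsilon^{-1}$ under \eqref{condWT}, exactly as in the paper. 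What the paper's heavier Lemma~\ref{lem1} buys is reuse in Theorem~\ref{thm2}: there the two-sided equivalence for $\psi_\alpha$, combined with the sharper normalizer choice $p\approx d\log(d+1)$ and a split of the supremum at $p=d$ using Gnewuch's average $L_d$-bound, brings the $d$-exponent down from the $3+2/\alpha$ your argument gives to $\max\{1,2/\alpha\}$; your crude $L_\infty$ upper bound alone cannot reach that, as you anticipate in your closing remark about the suboptimal choice $q\approx d$.
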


\begin{remark}\rm
Note that by choosing $\psi_{\alpha, \varphi}(p) = p^\alpha$ we obtain the classical $L_\alpha$-norm. In this case $\varphi(\alpha)=1$ and $\varphi(x)=\infty$ for all $x>\alpha$. This choice of $\varphi$ does not satisfy any of the conditions in Theorem~\ref{thm1}.

An example of a function $\varphi$ that satisfies condition \eqref{condWT} for weak tractability is $\varphi(p)=\exp(p^{\tau})$ with some $\tau \in (0,1)$. This function does not satisfy condition \eqref{condPT}.
\end{remark}
 
We can in fact provide a more accurate estimate for the exponential Orlicz norms and the $d$-exponent of polynomial tractability.

\begin{theorem}\label{thm2}
For any $\alpha \in[1,\infty)$, we have
\begin{equation*}
N_{\psi_{\alpha}}(\varepsilon, d) \le \left\lceil C_{\alpha} \, d^{\max\{1,2/\alpha\}}\, (\log(d+1))^{2/\alpha}    \varepsilon^{-2}\right\rceil,
\end{equation*}
where $$C_{\alpha}= 2601 \cdot \alpha^{2/\alpha} \cdot \left(\frac{\sqrt{2 \pi}}{{\rm e}^{11/12}}\right)^{2/\alpha} .$$
In particular, the $d$-exponent of polynomial tractability is at most $\max\{1,2/\alpha\}$.  
\end{theorem}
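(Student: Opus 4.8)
The plan is to construct good point sets by a Monte Carlo argument and to transfer the $\psi_\alpha$-norm to Lebesgue moments through the power series $\psi_\alpha(x)=\sum_{p\ge1}x^{p\alpha}/p!$. The starting point is the identity $\int_{[0,1]^d}\psi_\alpha(|f|/K)\rd\bsx=\sum_{p\ge1}\|f\|_{L_{p\alpha}}^{p\alpha}/(p!\,K^{p\alpha})$, valid for every measurable $f$, which upon comparing a single term (respectively all terms) with $1$ yields the two-sided estimate
\[
\sup_{p\ge1}\frac{\|f\|_{L_{p\alpha}}}{(p!)^{1/(p\alpha)}}\ \le\ \|f\|_{\psi_\alpha}\ \le\ \mathrm e\,\sup_{p\ge1}\frac{\|f\|_{L_{p\alpha}}}{(p!)^{1/(p\alpha)}}.
\]
Replacing $(p!)^{1/(p\alpha)}$ by sharp Stirling bounds is precisely where the explicit constant $C_\alpha$, and in particular the factor $(\sqrt{2\pi}/\mathrm e^{11/12})^{2/\alpha}$, will be generated.

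For the denominator I would estimate the initial discrepancy from below. Since $\Delta_\emptyset(\bst)=-\prod_{i=1}^d t_i$ and $\int_{[0,1]^d}(\prod_{i=1}^d t_i)^s\rd\bst=(s+1)^{-d}$, the left inequality above gives, for every $p$,
\[
\|\Delta_\emptyset\|_{\psi_\alpha}\ \ge\ \frac{(p\alpha+1)^{-d/(p\alpha)}}{(p!)^{1/(p\alpha)}}.
\]
Optimizing this single-term bound over $p$ (the relevant moment order $p\alpha$ turning out to be of order $d$) produces a lower bound that decays only polynomially in $d$; it is this polynomial decay, together with the Stirling factors, that feeds the powers $d^{\max\{1,2/\alpha\}}$ and $(\log(d+1))^{2/\alpha}$ into the final estimate.

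For the numerator I would take $\bsx_1,\dots,\bsx_N$ independent and uniformly distributed on $[0,1]^d$ and bound the average
\[
\EE_\cP\int_{[0,1]^d}\psi_\alpha\!\left(\frac{|\Delta_\cP(\bst)|}{K}\right)\rd\bst=\sum_{p\ge1}\frac{1}{p!\,K^{p\alpha}}\int_{[0,1]^d}\EE_\cP|\Delta_\cP(\bst)|^{p\alpha}\rd\bst.
\]
For fixed $\bst$ the variable $N\Delta_\cP(\bst)$ is a sum of $N$ independent centred Bernoulli variables of variance $V(1-V)$ with $V=\prod_{i=1}^d t_i$, so a Bernstein/Bennett moment inequality gives $\EE_\cP|\Delta_\cP(\bst)|^{q}\lesssim (qV/N)^{q/2}+(q/N)^{q}$; integrating the sub-Gaussian part by means of $\int_{[0,1]^d}V^{q/2}\rd\bst=(q/2+1)^{-d}$ converts $\int_{[0,1]^d}\EE_\cP|\Delta_\cP(\bst)|^{p\alpha}\rd\bst$ into a term of order $(p\alpha/N)^{p\alpha/2}(p\alpha/2+1)^{-d}$. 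The tail of the series in $p$ cannot be handled by this bound alone and must be controlled through $|\Delta_\cP|\le\|\Delta_\cP\|_\infty$ together with the star-discrepancy estimate $\|\Delta_\cP\|_\infty\lesssim\sqrt{d/N}$ of Heinrich, Novak, Wasilkowski and Wo\'zniakowski, applied to the same (random) point set. Once the average above is shown to be at most $1$, some point set $\cP$ satisfies $\int_{[0,1]^d}\psi_\alpha(|\Delta_\cP|/K)\rd\bst\le1$, hence $\|\Delta_\cP\|_{\psi_\alpha}\le K$.

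The crux, and the step I expect to be most delicate, is the simultaneous balancing over the moment order: one must choose the range of $p$ (equivalently $q=p\alpha$) so that the growth $\sqrt{q/N}$ is offset against both the decay $(q/2+1)^{-d/q}$ and the normalization $(p!)^{-1/(p\alpha)}\sim(\mathrm e/p)^{1/\alpha}$, while simultaneously ensuring that the sub-exponential $(q/N)^q$ contribution and the large-$p$ tail do not dominate. Carrying the constants through this balance, dividing the resulting bound for $\|\Delta_\cP\|_{\psi_\alpha}$ by the lower bound for $\|\Delta_\emptyset\|_{\psi_\alpha}$, and requiring the quotient to be at most $\varepsilon$, one solves for $N$ and reads off the stated estimate. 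The Monte Carlo exponent in $N^{-1/2}$ is exactly what becomes the $\varepsilon^{-2}$, and the case distinction $\max\{1,2/\alpha\}$ reflects whether the variance term or the factorial normalization governs the optimal moment order. The two remaining hazards are taming the sub-exponential and tail contributions so that they do not degrade the exponent, and the careful Stirling bookkeeping needed to reach the precise constant $C_\alpha$.
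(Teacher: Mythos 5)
Your proposal follows the same overall skeleton as the paper's proof---a two-sided comparison of $\|\cdot\|_{\psi_\alpha}$ with a supremum of normalized $L_q$-norms obtained from the series expansion, a lower bound for the initial discrepancy by optimizing the moment order, random point sets with a Markov-type selection, and a split of the moment range near $q\approx d$ with a star-discrepancy bound controlling the large moments---but the technical core is genuinely different. The paper never touches pointwise moments of $\Delta_{\cP}(\bst)$: it cites Gnewuch's bound $\EE\,\|\Delta_{\cP}\|_{L_d}\le 2^{5/4}3^{-3/4}N^{-1/2}$ and the Aistleitner--Hofer probabilistic star-discrepancy bound, combines them via Markov's inequality into a single point set satisfying $\|\Delta_{\cP}\|_{L_d}\le a N^{-1/2}$ and $\|\Delta_{\cP}\|_{L_\infty}\le a d^{1/2}N^{-1/2}$ with $a=12.75$, and then controls \emph{all} moments at once by monotonicity of $L_p$-norms on the probability space $[0,1]^d$: $\|\Delta_{\cP}\|_{L_p}\le\|\Delta_{\cP}\|_{L_d}$ for $p\le d$ and $\|\Delta_{\cP}\|_{L_p}\le\|\Delta_{\cP}\|_{L_\infty}$ for $p\ge d$. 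This two-norm trick is exactly what spares the paper the ``delicate balancing'' you anticipate: your Bernstein/Bennett analysis of $\EE\,|\Delta_{\cP}(\bst)|^{q}$ essentially re-proves a version of Gnewuch's result inline, simultaneously for all $q$, and it can be pushed through, but at the price of much heavier bookkeeping. What your route buys is self-containedness; what the paper's route buys is brevity and small explicit constants. Note also that your concluding step is stated too loosely: since the tail of the series is only controlled on the high-probability star-discrepancy event (for which the clean citation is Aistleitner--Hofer rather than the existence statement of Heinrich--Novak--Wasilkowski--Wo\'zniakowski), you cannot literally show ``the average is at most $1$''; you must apply Markov to the head of the series and intersect with that event, which is precisely the selection argument the paper carries out.

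Two concrete caveats. First, in the initial-discrepancy optimization the optimal moment order is $q\asymp d\log d$, not $q\asymp d$: the paper chooses $p=d\log(d+1)$, for which $(p+1)^{-d/p}$ is bounded below by an absolute constant, giving $\|\Delta_\emptyset\|_\alpha\ge \tfrac14 (d\log(d+1))^{-1/\alpha}$. If the moment order is only $cd$ for a constant $c$, you lose an additional factor of order $d^{-1/c}$ in the lower bound, hence an extra $d^{2/c}$ in the final estimate; this still yields the ``in particular'' claim about the $d$-exponent (which is an infimum over admissible exponents), but not the displayed bound with the $(\log(d+1))^{2/\alpha}$ factor. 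Second, the constant $C_\alpha=2601\cdot\alpha^{2/\alpha}\bigl(\sqrt{2\pi}/{\rm e}^{11/12}\bigr)^{2/\alpha}$ is an artifact of the paper's specific ingredients: $2601=16a^2$ with $a=12.75$ is dictated by the explicit constants of Gnewuch and Aistleitner--Hofer, while the factors $\alpha^{2/\alpha}$ and $\bigl(\sqrt{2\pi}/{\rm e}^{11/12}\bigr)^{2/\alpha}$ come from the Stirling constants in the norm equivalence of Lemma~\ref{lem1}. A Bernstein-based derivation will produce a different (in all likelihood larger) absolute constant, so your argument should be presented as proving the bound with \emph{some} explicit constant of the same shape, rather than aiming to reproduce $C_\alpha$ exactly.
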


This upper bound on $N_{\psi_\alpha}(\varepsilon,d)$ shows that for $\alpha \to \infty$ the inverse of the star-discrepancy depends linearly on the dimension, thereby matching the result of Heinrich, Novak, Wasilkowski, and Wo\'{z}niakowski \cite{HNWW}.
 
\vskip 2mm
In the following Section~\ref{sec:proofs} we present the proofs of our main results, where we start by establishing an equivalence between the norms $\| \cdot\|_{\psi_{\alpha,\varphi}}$ and an expression involving a supremum of classical $L_p$-norms. Subsection~\ref{subsec:thm1} is then devoted to the proof of Theorem~\ref{thm1}. The proof of Theorem~\ref{thm2} will be presented in Subsection~\ref{subsec:thm2}.

% % % % % % % % % % %
\section{The proofs}\label{sec:proofs}
% % % % % % % % % % %

For the proofs of Theorems~\ref{thm1} and \ref{thm2} we define another norm which we show to be equivalent to the Orlicz norm $\| \cdot \|_{\psi_{\alpha, \varphi}}$, namely
\begin{equation}\label{eq:phi norm}
\| f\|_{\varphi} := \sup_{p \ge 1} \frac{\|f\|_{L_p}}{\varphi(p)}
\end{equation}
with $\varphi:[0,\infty)\to (0,\infty)$. In the special case of exponential Orlicz norms $\| \cdot \|_{\psi_\alpha}$ such an equivalence is a classical result in asymptotic geometric analysis and may be found, without explicit constants, in the monographs \cite[Lemma 3.5.5]{AGM_book} and \cite[Lemma 2.4.2]{BGVV2014}. In the context of this paper it is important that these constants do not depend on the dimension $d$.

\begin{lemma}\label{lem1}
Let $d\in\NN$ and $\alpha\in[1,\infty)$. For any measurable function $f: [0,1]^d \to \mathbb{R}$, we have the estimates
\begin{equation}\label{le:ne1}
\inf_{p \ge 1} \frac{\varphi(p)}{\max\{\varphi(\alpha), \varphi(p)\}}  \ \|f \|_{\varphi}  \le \| f\|_{\psi_{\alpha, \varphi}} \le 2^{1/\alpha} %\max\left\{ 1, \sup_{p \in (\alpha,1)} \frac{\varphi(1)}{\varphi(p)}\right\}  
\| f \|_{\varphi}.
\end{equation}
%If $\alpha \ge 1$, then the above supremum is considered to be 0. 
In particular, for any $\alpha \in[1,\infty)$, we have
\begin{equation}\label{le:ne2}
\left( \frac{\mathrm{e}^{11/12}}{ \sqrt{2 \pi}}  \right)^{1/\alpha} 
%\min\{1, \alpha^{1/\alpha}\}\, 
\| f\|_\alpha \le \|f \|_{\psi_\alpha} \le (2\, \mathrm{e}\, \alpha )^{1/\alpha} 
%\max\{1, \alpha^{1/\alpha}\} \,
\,\|f \|_\alpha, 
\end{equation}
where $\| f\|_\alpha:= \sup_{p \ge 1} p^{-1/\alpha} \|f\|_{L_p}$.
\end{lemma}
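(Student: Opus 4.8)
The plan is to reduce everything to the elementary identity obtained by expanding the definition of $\psi_{\alpha,\varphi}$ inside the Luxemburg functional. For $K>0$, writing $\psi_{\alpha,\varphi}(x)=\sum_{k\ge1}(x/\varphi(k\alpha))^{k\alpha}$ and interchanging summation and integration (all terms are nonnegative, so monotone convergence applies), I obtain
\[
\int_{[0,1]^d}\psi_{\alpha,\varphi}\Big(\frac{|f(\bsx)|}{K}\Big)\rd\bsx=\sum_{k\ge1}\Big(\frac{\|f\|_{L_{k\alpha}}}{K\,\varphi(k\alpha)}\Big)^{k\alpha}.
\]
This is the bridge between the Orlicz functional and the ratios $\|f\|_{L_{k\alpha}}/\varphi(k\alpha)$ out of which $\|f\|_\varphi$ is built; since $k\alpha\ge\alpha\ge1$, each such ratio is bounded by $\|f\|_\varphi$.

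For the upper estimate I set $K=2^{1/\alpha}\|f\|_\varphi$. Feeding $\|f\|_{L_{k\alpha}}\le\varphi(k\alpha)\|f\|_\varphi$ into the identity makes the $k$-th summand at most $(\|f\|_\varphi/K)^{k\alpha}=2^{-k}$, so the integral is at most $\sum_{k\ge1}2^{-k}=1$, and the definition of the Luxemburg norm yields $\|f\|_{\psi_{\alpha,\varphi}}\le K=2^{1/\alpha}\|f\|_\varphi$.

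For the lower estimate I put $K=\|f\|_{\psi_{\alpha,\varphi}}$, so the integral is at most $1$ and hence \emph{each} summand is at most $1$, giving $\|f\|_{L_{k\alpha}}\le K\varphi(k\alpha)$ for all $k\ge1$. I then aim to show $\|f\|_{L_p}\le\max\{\varphi(\alpha),\varphi(p)\}\,K$ for every real $p\ge1$; dividing by $\varphi(p)$ and passing to the supremum gives $\|f\|_\varphi\le\big(\sup_{p\ge1}\tfrac{\max\{\varphi(\alpha),\varphi(p)\}}{\varphi(p)}\big)K$, which is exactly the asserted inequality. For $1\le p\le\alpha$ this is immediate from the monotonicity of $q\mapsto\|f\|_{L_q}$ on the probability space $[0,1]^d$ and the $k=1$ bound: $\|f\|_{L_p}\le\|f\|_{L_\alpha}\le\varphi(\alpha)K$. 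The delicate regime is $p\ge\alpha$, where the discrete grid $\{k\alpha\}$ controls only finitely spaced exponents while I must recover $\|f\|_{L_p}\le\varphi(p)K$ at the intermediate values of $p$. This passage from the discrete exponents appearing in the series to the continuum of $L_p$-norms in $\|f\|_\varphi$ is the main obstacle, and I expect to resolve it by comparing $x^p$ pointwise against the \emph{entire} generating function $1+\psi_{\alpha,\varphi}$ (rather than a single term), integrated against $\int_{[0,1]^d}\big(1+\psi_{\alpha,\varphi}(|f|/K)\big)\le2$.

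For the sharp constants in \eqref{le:ne2} this pointwise comparison becomes exact, since $1+\psi_\alpha=\exp((\cdot)^\alpha)$. Maximizing $x\mapsto x^p\exp(-x^\alpha)$ gives $\sup_{x\ge0}x^p\mathrm{e}^{-x^\alpha}=(p/(\mathrm{e}\alpha))^{p/\alpha}$, hence $x^p\le(p/(\mathrm{e}\alpha))^{p/\alpha}\exp(x^\alpha)$; applying this with $x=|f|/K$, integrating, and using $\int_{[0,1]^d}\exp((|f|/K)^\alpha)\le2$ yields $\|f\|_{L_p}\le2^{1/p}(p/(\mathrm{e}\alpha))^{1/\alpha}K$ and therefore the lower bound on $\|f\|_{\psi_\alpha}$ in terms of $\|f\|_\alpha=\sup_{p\ge1}p^{-1/\alpha}\|f\|_{L_p}$. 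For the reverse inequality I expand $\exp((|f|/K)^\alpha)-1=\sum_{k\ge1}\|f\|_{L_{k\alpha}}^{k\alpha}/(k!\,K^{k\alpha})$, insert $\|f\|_{L_{k\alpha}}\le(k\alpha)^{1/\alpha}\|f\|_\alpha$, and sum the geometric series after the choice $K=(2\mathrm{e}\alpha)^{1/\alpha}\|f\|_\alpha$. The explicit constants $(\mathrm{e}^{11/12}/\sqrt{2\pi})^{1/\alpha}$ and $(2\mathrm{e}\alpha)^{1/\alpha}$ then emerge from the two-sided Stirling bounds $\sqrt{2\pi k}\,(k/\mathrm{e})^k\le k!\le\sqrt{2\pi k}\,(k/\mathrm{e})^k\mathrm{e}^{1/(12k)}$; the remaining obstacle is purely the bookkeeping of these factors while keeping every constant independent of the dimension $d$.
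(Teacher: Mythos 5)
Your series identity, the choice $K=2^{1/\alpha}\|f\|_{\varphi}$ for the right-hand inequality in \eqref{le:ne1}, and the upper bound $\|f\|_{\psi_\alpha}\le (2\mathrm{e}\alpha)^{1/\alpha}\|f\|_\alpha$ are correct and coincide with the paper's argument. The problem lies with the two lower bounds, where the step you explicitly defer is the crux, and your proposed resolution cannot work. The series controls $\|f\|_{L_p}$ only on the grid $p\in\{k\alpha:k\in\NN\}$, while $\|f\|_\varphi$ involves all real $p\ge 1$, and a non-decreasing $\varphi$ may jump between grid points. Concretely, take $\alpha=1$, let $\varphi\equiv 1$ on $[0,2)$ and $\varphi(p)=\max\{M,p\}$ on $[2,\infty)$, and let $f=\lambda^{-1}\mathbf{1}_E$ with $|E|=\lambda=1/\psi_{1,\varphi}(M/2)$. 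Since $\varphi(k)=M$ for $2\le k\le M$, the $k$-th term of $\psi_{1,\varphi}(M/2)$ is at most $2^{-k}$ for every $k\ge 2$, so $M/2\le\psi_{1,\varphi}(M/2)\le M/2+1$; a direct computation of the modular then gives $\|f\|_{\psi_{1,\varphi}}=\psi_{1,\varphi}(M/2)/(M/2)\le 1+2/M$, whereas $\|f\|_\varphi\ge\|f\|_{L_{1.9}}=\lambda^{-9/19}\ge (M/2)^{9/19}$. For $\alpha=1$ the left inequality in \eqref{le:ne1} asserts $\|f\|_\varphi\le\|f\|_{\psi_{1,\varphi}}$, so this is not just a hole in your write-up: the bound you aim to prove in the regime $p\ge\alpha$, namely $\|f\|_{L_p}\le\varphi(p)K$, is false for such $\varphi$, and the pointwise comparison $x^p\le C\,\varphi(p)^p\bigl(1+\psi_{\alpha,\varphi}(x)\bigr)$ fails at $x\approx M/2$. (You have in fact put your finger on a defect of the paper's own proof, which silently replaces $\sup_{\ell\in\NN}\|f\|_{L_{\alpha\ell}}/\varphi(\alpha\ell)$, which is what the series yields, by $\sup_{p\ge\alpha}\|f\|_{L_p}/\varphi(p)$ over real $p$; but flagging the obstacle is not the same as closing it, and for general $\varphi$ it cannot be closed.)

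For \eqref{le:ne2} your pointwise comparison does work, because $1+\psi_\alpha=\exp((\cdot)^\alpha)$ and $\varphi(p)=p^{1/\alpha}$ is regular across the grid: it yields $\|f\|_{L_p}\le 2^{1/p}(p/(\mathrm{e}\alpha))^{1/\alpha}K$, hence
\[
\|f\|_{\psi_\alpha}\ \ge\ \tfrac{1}{2}\,(\mathrm{e}\alpha)^{1/\alpha}\,\|f\|_\alpha ,
\]
since $2^{1/p}$ can only be bounded by $2$ uniformly in $p\ge1$. This is a perfectly good dimension-free lower bound (and suffices for the tractability applications in the paper), but it is not the constant stated in the lemma: for $\alpha=4$ one has $\tfrac12(4\mathrm{e})^{1/4}\approx 0.908 < 0.999\approx(\mathrm{e}^{11/12}/\sqrt{2\pi})^{1/4}$, so your argument does not establish \eqref{le:ne2} as stated for larger $\alpha$. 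Your closing appeal to the two-sided Stirling bounds does not repair this: Stirling enters via the discrete exponents $k\alpha$, where the grid-to-continuum passage resurfaces; carried out correctly (for $p\in[\ell\alpha,(\ell+1)\alpha]$ use $\|f\|_{L_p}\le\|f\|_{L_{(\ell+1)\alpha}}$ and $(\ell+1)\alpha\le p+\alpha\le(1+\alpha)p$), that route gives the constant $\bigl(\tfrac{\mathrm{e}^{11/12}}{\sqrt{2\pi}}\cdot\tfrac{\alpha}{1+\alpha}\bigr)^{1/\alpha}$, again strictly smaller than the one claimed. So the honest summary is: your upper bounds are complete, your lower bound for \eqref{le:ne2} is correct but with a different (sometimes weaker) constant, and the lower bound for \eqref{le:ne1} is a genuine gap that no argument can fill in the stated generality.
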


\begin{proof}
Using the series expansion of $\psi_{\alpha, \varphi}$, we obtain
\begin{align*}
\int_{[0,1]^d} \psi_{\alpha, \varphi} \left(\frac{|f(\bsx)|}{K} \right) \,\mathrm{d} \bsx = & \sum_{p=1}^\infty  \left( \frac{\|f \|_{L_{\alpha p}}}{K  \varphi( \alpha p)  } \right)^{\alpha p}.
\end{align*}
By choosing
\begin{equation*}
K= 2^{1/\alpha} \sup_{p \ge \alpha} \frac{ \| f \|_{L_{p}}}{ \varphi(p) },
\end{equation*}
we obtain
\begin{equation*}
\int_{[0,1]^d} \psi_{\alpha, \varphi} \left(\frac{|f(\bsx)|}{K} \right) \,\mathrm{d} \bsx \le  \sum_{p=1}^\infty 2^{-p}  = 1.
\end{equation*}
Therefore, we have 
$$
\| f\|_{\psi_{\alpha, \varphi}} \le K= 2^{1/\alpha} \sup_{p \ge \alpha} \frac{ \| f \|_{L_{p}}}{ \varphi(p) }\,.
$$
This implies the upper bound in \eqref{le:ne1} for all $\alpha \ge 1$. 
%For $\alpha \in (0,1)$ we have
%$$\sup_{p \ge \alpha} \frac{ \| f \|_{L_{p}}}{ \varphi(p) }=\max\left\{ \sup_{p \in (\alpha,1)} \frac{ \| f \|_{L_{p}}}{ \varphi(p) }\ ,\ \sup_{p \ge 1} \frac{ \| f \|_{L_{p}}}{ \varphi(p) } \right\}=\max\left\{ \sup_{p \in (\alpha,1)} \frac{ \| f \|_{L_{p}}}{ \varphi(p) }\ ,\ \| f \|_{\varphi} \right\}.$$
%and
%\begin{equation*}
%\sup_{p \in (\alpha,1)} \frac{ \|f\|_{L_p}}{\varphi(p)} \le \|f\|_{L_1} \sup_{p \in (\alpha,1)} \frac{1}{\varphi(p)} = \frac{\|f\|_{L_1}}{\varphi(1)} \sup_{p\in (\alpha,1)} \frac{\varphi(1)}{\varphi(p)} \le \|f\|_{\varphi} \sup_{p\in (\alpha,1)} \frac{\varphi(1)}{\varphi(p)}.
%\end{equation*}
%and therefore
%$$\sup_{p \ge \alpha} \frac{ \| f \|_{L_{p}}}{ \varphi(p) } \le \max\left\{1,\sup_{p \in(\alpha,1)} \frac{\varphi(1) }{\varphi(p)}\right\}  \|f\|_{\varphi}.$$ This complets the proof of the upper bound in \eqref{le:ne1}.

For the lower bound, we argue as follows. For any $K \in(0,\infty)$ such that $K\geq \| f\|_{\psi_{\alpha, \varphi}}$, we have
\begin{equation*}
1 \ge \int_{[0,1]^d} \psi_{\alpha, \varphi} \left( \frac{|f (\bsx)|}{K} \right) \,\mathrm{d} \bsx =  \sum_{p=1}^\infty  \left( \frac{\|f \|_{L_{\alpha p}}}{K  \varphi( \alpha p)  } \right)^{\alpha p}  \ge \left( \sup_{p \ge 1} \frac{\| f\|_{L_{\alpha p}}}{K \varphi(\alpha p)}  \right)^{\alpha p}.
\end{equation*}
This implies that
\begin{equation*}
 K \ge \sup_{p \ge \alpha} \frac{\|f \|_{L_p}}{ \varphi(p)}
\end{equation*}
and since this holds for any such $K$, we obtain
\begin{equation*}
\| f\|_{\psi_{\alpha, \varphi}} \ge \sup_{p \ge \alpha} \frac{\|f \|_{L_p}}{ \varphi(p)}.
\end{equation*}

%If $\alpha \in (0,1]$, then $$\sup_{p \ge \alpha} \frac{\|f \|_{L_p}}{ \varphi(p)} \ge \sup_{p \ge 1} \frac{\|f \|_{L_p}}{ \varphi(p)}=\|f\|_{\varphi}.$$
%This implies $\|f\|_{\psi_{\alpha, \varphi}} \ge \|f\|_{\varphi}$ for any $\alpha \in (0,1]$. 
If $\alpha \in[1,\infty)$, and $q \in [1,\alpha]$, then
$$\sup_{p \ge \alpha} \frac{\|f \|_{L_p}}{ \varphi(p)} \ge  \frac{\|f \|_{L_\alpha}}{ \varphi(\alpha)} \ge  \frac{\|f \|_{L_q}}{ \varphi(q)} \inf_{q \in [1,\alpha]}\frac{\varphi(q)}{\varphi(\alpha)}. $$
Hence,  
$$\sup_{p \ge \alpha} \frac{\|f \|_{L_p}}{ \varphi(p)} \ge  \min\left\{1,\inf_{q \in [1,\alpha]}\frac{\varphi(q)}{\varphi(\alpha)}\right\} \sup_{p \ge 1} \frac{\|f \|_{L_p}}{ \varphi(p)}.$$

In any case, for all $\alpha\in[1,\infty)$, we have that
$$
\sup_{p \ge \alpha} \frac{\|f \|_{L_p}}{ \varphi(p)} \ge  \min\left\{1,\inf_{q \ge 1}\frac{\varphi(q)}{\varphi(\alpha)}\right\}\, \|f\|_{\varphi} =  \inf_{q \ge 1} \frac{ \varphi(q)}{ \max\{ \varphi(\alpha), \varphi(q)\}}\ \|f\|_{\varphi} ,
$$
which implies the result since $\inf_{q \ge 1} \frac{ \varphi(q)}{ \max\{\varphi(\alpha), \varphi(q)\}} \le 1$.\\

The bound \eqref{le:ne2} for the $\psi_\alpha$-norms can be shown using similar arguments together with Stirling's formula
\begin{equation}\label{ineq_Stirling}
\sqrt{2 \pi p} (p/ \mathrm{e})^{p} \le p! \le \sqrt{2\pi p} (p/\mathrm{e})^p \mathrm{e}^{1/(12 p)}.
\end{equation}

We use the Taylor series expansion of the exponential function and obtain
\begin{eqnarray}\label{eq1}
\int_{[0,1]^d} \psi_{\alpha}\left(\frac{|f(\bsx)|}{K}\right) \rd \bsx & = &  \int_{[0,1]^d} \sum_{\ell=1}^{\infty} \frac{1}{\ell!} \left(\frac{|f(\bsx)|}{K}\right)^{\alpha \ell} \rd \bsx\nonumber\\
& = & \sum_{\ell=1}^{\infty} \frac{1}{\ell !} \left(\frac{\|f\|_{L_{\alpha \ell}}}{K}\right)^{\alpha \ell}.  
\end{eqnarray}
Using Stirling's formula \eqref{ineq_Stirling} we get 
\begin{eqnarray*}
\int_{[0,1]^d} \psi_{\alpha}\left(\frac{|f(\bsx)|}{K}\right) \rd \bsx \le \sum_{\ell=1}^{\infty} \frac{{\rm e}^{\ell}}{\ell^{\ell}} \left(\frac{\|f\|_{L_{\alpha \ell}}}{K}\right)^{\alpha \ell}= \sum_{\ell=1}^{\infty} \left(\frac{\|f\|_{L_{\alpha \ell}}\, ({\rm e} \alpha)^{1/\alpha} }{K (\ell \alpha)^{1/\alpha}}\right)^{\alpha \ell}.
\end{eqnarray*}
If we choose $$K=(2{\rm e} \alpha)^{1/\alpha} \,\sup_{\ell \ge 1} \frac{\|f\|_{L_{\alpha \ell}}}{(\ell \alpha)^{1/\alpha}},$$ then we obtain $$\int_{[0,1]^d} \psi_{\alpha}\left(\frac{|f(\bsx)|}{K}\right) \rd \bsx \le \sum_{\ell=1}^{\infty} \frac{1}{2^{\ell}}=1.$$ Hence $$\|f\|_{\psi_{\alpha}} \le K =  (2 {\rm e} \alpha)^{1/\alpha} \,\sup_{\ell \ge 1} \frac{\|f\|_{L_{\alpha \ell}}}{(\alpha \ell)^{1/\alpha}} =  (2 {\rm e} \alpha)^{1/\alpha} \,\sup_{p \ge \alpha} \frac{\|f\|_{L_p}}{p^{1/\alpha}} \le (2 {\rm e} \alpha)^{1/\alpha} \,\|f\|_{\alpha}.$$

On the other hand, from \eqref{eq1} and the upper bound in Stirling's formula \eqref{ineq_Stirling} we obtain
\begin{eqnarray*}
\int_{[0,1]^d} \psi_{\alpha}\left(\frac{|f(\bsx)|}{K}\right) \rd \bsx & \ge & \sum_{\ell=1}^{\infty} \frac{1}{\sqrt{2 \pi \ell} \, {\rm e}^{1/(12 \ell)}} \left(\frac{{\rm e}}{\ell}\right)^{\ell} \left(\frac{\|f\|_{L_{\alpha \ell}}}{K}\right)^{\alpha \ell}\\
& \ge & \sup_{\ell \ge 1} \frac{({\rm e} \alpha)^{\ell}}{\sqrt{2 \pi \ell} \, {\rm e}^{1/(12 \ell)}}  \frac{1}{K^{\alpha \ell}} \left(\frac{\|f\|_{L_{\alpha \ell}}}{(\alpha \ell)^{1/\alpha}}\right)^{\alpha \ell}.
\end{eqnarray*}
Now, in order to have $\int_{[0,1]^d} \psi_{\alpha}\left(\frac{|f(\bsx)|}{K}\right) \rd \bsx \le 1$ we find that $K$ has to satisfy $$K^{\alpha \ell} \ge  \frac{({\rm e} \alpha)^{\ell}}{\sqrt{2 \pi \ell} \, {\rm e}^{1/(12 \ell)}}  \left(\frac{\|f\|_{L_{\alpha \ell}}}{(\alpha \ell)^{1/\alpha}}\right)^{\alpha \ell}$$ for all $\ell\ge 1$. Hence $$K \ge  \frac{({\rm e} \alpha)^{1/\alpha}}{(\sqrt{2 \pi \ell} \, {\rm e}^{1/(12 \ell)})^{1/(\alpha \ell)}}  \frac{\|f\|_{L_{\alpha \ell}}}{(\alpha \ell)^{1/\alpha}} \ge  \left(\frac{{\rm e} \alpha}{\sqrt{2 \pi} \, {\rm e}^{1/12}}\right)^{1/\alpha}\,  \frac{\|f\|_{L_{\alpha \ell}}}{(\alpha \ell)^{1/\alpha}}  $$ for all $\ell\ge 1$. Hence, $$K \ge \left(\frac{{\rm e} \alpha}{\sqrt{2 \pi} \, {\rm e}^{1/12}}\right)^{1/\alpha}\, \sup_{\ell \ge 1} \frac{\|f\|_{L_{\alpha \ell}}}{(\alpha \ell)^{1/\alpha}} = \left(\frac{{\rm e} \alpha}{\sqrt{2 \pi} \, {\rm e}^{1/12}}\right)^{1/\alpha}\, \sup_{p \ge \alpha} \frac{\|f\|_{L_p}}{p^{1/\alpha}}.$$ 
For any $q \in [1,\alpha]$ we have $$ \sup_{p \ge \alpha} \frac{\|f\|_{L_p}}{p^{1/\alpha}}\ge \frac{\|f\|_{L_\alpha}}{\alpha^{1/\alpha}}\ge \frac{\|f\|_{L_q}}{q^{1/\alpha}} \frac{q^{1/\alpha}}{\alpha^{1/\alpha}} \ge \frac{1}{\alpha^{1/\alpha}} \frac{\|f\|_{L_q}}{q^{1/\alpha}}.$$ Hence
$$\sup_{p \ge \alpha} \frac{\|f\|_{L_p}}{p^{1/\alpha}} \ge \frac{1}{\alpha^{1/\alpha}}\, \sup_{p \ge 1} \frac{\|f\|_{L_p}}{p^{1/\alpha}}.$$

This implies $$\|f\|_{\psi_{\alpha}} \ge \left(\frac{{\rm e}^{11/12}}{\sqrt{2 \pi}}\right)^{1/\alpha}\, \|f\|_{\alpha}$$ as desired. This closes the proof.
\end{proof}

We are now prepared to present the proofs of our main results.

% % % % % % % % % % % % % % % % % % % % % % % % % % % % % % % % %
\subsection{The proof of Theorem~\ref{thm1}} \label{subsec:thm1}
% % % % % % % % % % % % % % % % % % % % % % % % % % % % % % % % % 

An important consequence of Lemma~\ref{lem1} is that the constants do not depend on the dimension, and hence the Orlicz norm discrepancy satisfies the same tractability properties as the discrepancy with respect to the norm $\| \cdot \|_{\varphi}$. Therefore in the following proof we will only use the latter norm.

%\begin{proof}[Proof of Theorem~\ref{thm1}.]
It is well known and easily checked (see, e.g., \cite[p.~54]{NW10}) that for every $p \in [1,\infty)$, the initial $L_p$-discrepancy in dimension $d$ satisfies
\begin{equation*}
\| \Delta_\emptyset \|_{L_p} = \frac{1}{(p+1)^{d/p}}.
\end{equation*}
If $p = \infty$, then the initial discrepancy is $1$ for every dimension $d \in \mathbb{N}$. This implies that
\begin{equation*}
\| \Delta_\emptyset \|_{\varphi} = \sup_{p \ge 1} \frac{1}{ \varphi(p)} \frac{1}{(p+1)^{d/p}} \ge \frac{1}{(d+1) \varphi(d)},
\end{equation*}
where we used the choice $p = d$ to obtain the last inequality.

From \cite{HNWW} we know that
\begin{equation}\label{HNWWbound}
\mathrm{disc}_{L_\infty}(N, d) \le C_{{\rm PT}}\, \sqrt{\frac{d}{N}},
\end{equation}
for some absolute constant $C_{{\rm PT}} \in(0,\infty)$. Aistleitner~\cite{Aist} showed that one can choose $C_{{\rm PT}} = 10$, but according to \cite{GH19} the constant $C_{{\rm PT}}$ may be reduced to $C_{{\rm PT}} = 2.5287$. 

Hence, we have
\begin{equation*}
\mathrm{disc}_{\varphi}(N, d)  \le  \mathrm{disc}_{L_\infty}(N,d)\cdot \sup_{p \ge 1} \frac{1}{\varphi(p)} \le C_{{\rm PT}}\, \sup_{p \ge 1} \frac{1}{\varphi(p)}  \,\sqrt{\frac{d}{N}}\,,
\end{equation*}
where $\mathrm{disc}_{\varphi}(N, d)$ stands for the discrepancy with respect to the norm $\|\cdot\|_\varphi$ introduced in \eqref{eq:phi norm}. This implies that
\begin{align}\label{Nbound1}
N_{\varphi}(\varepsilon, d) \le & \min \left \{ N \in \mathbb{N}\,:\,  C_{{\rm PT}}\, \sup_{p \ge 1} \frac{1}{\varphi(p)}\, \sqrt{\frac{d}{N}}  \le  \frac{\varepsilon }{(d+1) \varphi(d)} \right\} \nonumber \\  \le &  \left \lceil C_{{\rm PT}}^2 \frac{d (d+1)^2 \varphi^2(d) }{\varepsilon^{2} } \sup_{p \ge 1} \frac{1}{ \varphi^2(p) } \right\rceil,
\end{align}
where for $x\in\RR$, $\lceil x \rceil:=\min\{ n\in\mathbb Z\,:\, n\geq x\}$.  
This concludes the proof of the second statement in Theorem~\ref{thm1}. As mentioned above, if we choose $\varphi(\alpha p) = (p!)^{1/(\alpha p)}$, then we obtain the $\psi_\alpha$-norm. Using Stirling's formula \eqref{ineq_Stirling} together with the previous result, we can deduce the first part of Theorem~\ref{thm1}.

In order to prove the third part of Theorem~\ref{thm1}, we apply the logarithm to $N_{\varphi}(\varepsilon, d)$. From \eqref{Nbound1} we obtain that 
$$\log N_{\varphi}(\varepsilon, d) \le C' +2 \log \varepsilon^{-1} + 3 \log (d+1) + 2 \log \varphi(d)$$ 
for some $C'\in(0,\infty)$ only depending on $\varphi$. 
Hence,
$$\limsup_{d+\varepsilon^{-1} \rightarrow \infty} \frac{\log N_{\varphi}(\varepsilon, d)}{d+\varepsilon^{-1}} \le 2 \limsup_{d+\varepsilon^{-1} \rightarrow \infty} \frac{\log \varphi(d)}{d+\varepsilon^{-1}}=0.$$ This implies weak tractability of the discrepancy with respect to $\| \cdot \|_{\psi_{\alpha, \varphi}}$. \hfill $\qed$

% % % % % % % % % % % % % % % % % % % % % % % % % % % % % % % %
\subsection{The proof of Theorem~\ref{thm2}}\label{subsec:thm2}
% % % % % % % % % % % % % % % % % % % % % % % % % % % % % % % %

First we show the corresponding result for $N_{\alpha}(\varepsilon,d)$ which is based on the norm $\|\cdot\|_{\alpha}$. Recall that for a measurable function $f: [0,1]^d \to \mathbb{R}$, we defined $\| f\|_\alpha= \sup_{p \ge 1} p^{-1/\alpha} \|f\|_{L_p}$. Let us start with a lower bound for the initial discrepancy. We have
\begin{eqnarray}\label{lbd:intitdiscalpha}
\| \Delta_\emptyset \|_{\alpha} & = & \sup_{p \ge 1} \frac{1}{ p^{1/\alpha}} \frac{1}{(p+1)^{d/p}}\nonumber\\ & \ge & \frac{1}{(d\log (d+1))^{1/\alpha}} \frac{1}{(1+d \log (d+1))^{1/\log (d+1)}}\nonumber\\
& \ge & \frac{1}{4 (d\log (d+1))^{1/\alpha}},
\end{eqnarray}
where we have chosen $p=d \log (d+1)$. The final estimate follows from the fact that $$d \mapsto \frac{1}{(1+d \log (d+1))^{1/\log (d+1)}}$$ attains its minimum in $d=20$ with minimal value $0.257944\ldots$.

Now let $d \in \NN$. Then from Gnewuch~\cite[Theorem~3]{gne05} we obtain that
$$
 \EE \, \|\Delta_{\cP}\|_{L_d} \le 2^{5/4} 3^{-3/4} \, N^{-1/2}
$$
and from Aistleitner and Hofer \cite[Corollary~1]{aisthof} that for any $q \in (0,1)$
$$
 \PP \left[\|\Delta_{\cP}\|_{L_\infty} \le 5.7\sqrt{4.9+\log((1-q)^{-1})} d^{1/2} N^{-1/2}\right] \ge q,
$$
where the expectation and probability are with respect to the point set $\cP$ consisting of independent and uniformly distributed points. Now Markov's inequality implies that there exists an $N$-element point set $\cP$ in $[0,1)^d$ such that
$$
 \|\Delta_{\cP}\|_{L_d} \le a \,  N^{-1/2} \qquad \text{and}\qquad
 \|\Delta_{\cP}\|_{L_\infty} \le a \, d^{1/2} N^{-1/2}
$$
provided that
$$
 1> \frac{2^{5/4}}{3^{3/4} a}+\exp\left(4.9-\left(\frac{a}{5.7}\right)^2\right).
$$
%Here we fix $C_{{\rm PT}}$ according to the value given by \cite{GH19}, i.e., $C_{{\rm PT}}=2.5287$. 
For this point set $\cP$, we obtain
\begin{eqnarray}\label{estalphadisc}
\|\Delta_{\cP}\|_{\alpha}  & = & \sup_{p \ge 1} p^{-1/\alpha} \|\Delta_{\cP}\|_{L_p}\nonumber\\
& = & \max\left\{\sup_{p \le d} p^{-1/\alpha} \|\Delta_{\cP}\|_{L_p},\sup_{p \ge d} p^{-1/\alpha} \|\Delta_{\cP}\|_{L_p}\right\}\nonumber\\
& \le & a\, N^{-1/2} \, \max\left\{\sup_{p \le d} p^{-1/\alpha} ,\sup_{p \ge d} p^{-1/\alpha} d^{1/2}\right\}\nonumber\\
& = & a \, N^{-1/2}\, \max\big\{1,d^{1/2-1/\alpha}\big\}.
\end{eqnarray}
Combining \eqref{lbd:intitdiscalpha} and \eqref{estalphadisc}, we obtain the upper bound
\begin{eqnarray*}\label{bdNst}
 N_{\alpha}(\varepsilon,d) & \le &  \min\left\{N \in \NN \,: \, \frac{a}{N^{1/2}} \max\big\{1,d^{1/2-1/\alpha}\big\} \le \varepsilon \, \frac{1}{4 (d\log (d+1))^{1/\alpha}}\right\}\nonumber\\
 & = & \left\lceil 16 \cdot a^2\, d^{2/\alpha}\, \max\big\{1,d^{1-2/\alpha}\big\}\, \big(\log(d+1)\big)^{2/\alpha} \varepsilon^{-2}\right\rceil\\
 & = & \left\lceil 16 \cdot a^2\, d^{\max\{1,2/\alpha\}}\,  \big(\log(d+1)\big)^{2/\alpha} \varepsilon^{-2}\right\rceil
\end{eqnarray*}
for all $\alpha\in[1,\infty)$. 
Note that we may choose $a=12.75$ leading to $16 a^2 =2601$.

%Note that $16 \cdot C_{{\rm PT}}^2=102.309 \ldots$. 

Using the second part of Lemma~\ref{lem1}, we obtain
$$N_{\psi_{\alpha}}(\varepsilon,d) \le N_{\alpha}(\varepsilon',d)$$  
with $$\varepsilon' = \varepsilon\, \left(\frac{{\rm e}^{11/12}}{\sqrt{2 \pi}}\right)^{1/\alpha}\alpha^{-1/\alpha}\,. %\frac{\min\{1,\alpha^{1/\alpha}\}}{\max\{1,\alpha^{1/\alpha}\}}.
$$ From this we finally obtain the upper bound for $N_{\psi_{\alpha}}(\varepsilon,d)$. \hfill $\qed$

\bibliographystyle{plain}

\vspace{0.5cm}
\noindent{\bf Author's Addresses:}\\

\noindent Josef Dick, School of Mathematics and Statistics, The University of New South Wales, Sydney, NSW 2052, Australia.  Email: josef.dick@unsw.edu.au \\

\noindent Aicke Hinrichs, Institut f\"{u}r Analysis, Universit\"{a}t Linz, Altenbergerstra{\ss}e 69, A-4040 Linz, Austria. Email: aicke.hinrichs@jku.at\\

\noindent Friedrich Pillichshammer, Institut f\"{u}r Analysis, Universit\"{a}t Linz, Altenbergerstra{\ss}e 69, A-4040 Linz, Austria. Email: friedrich.pillichshammer@jku.at\\

\noindent Joscha Prochno, Institut f\"{u}r Mathematik und Wissenschaftliches Rechnen, Karl-Franzens-Universit\"{a}t Graz, Heinrichstra{\ss}e 36, A-8010 Graz, Austria. Email: joscha.prochno@uni-graz.at

\end{document}